\newtheorem{theorem}{Theorem}
\theoremstyle{plain}
\newtheorem{corollary}{Corollary}
\newtheorem{remark}{Remark}
\numberwithin{equation}{section}
\begin{document}
\title[]{on the $(1,1)-$tensor bundle \\
with Cheeger-Gromoll type metric}
\author{Aydin GEZER}
\address{Ataturk University, Faculty of Science, Department of Mathematics,
25240, Erzurum-Turkey.}
\email{agezer@atauni.edu.tr}
\author{Murat ALTUNBAS}
\address{Erzincan University, Faculty of Science and Art, Department of
Mathematics, 24030, Erzincan-Turkey.}
\email{maltunbas@erzincan.edu.tr}
\subjclass[2000]{ 53C07, 53C55, 55R10.}
\keywords{Almost product structure, Cheeger-Gromoll type metric, metric
connection, tensor bundle.}

\begin{abstract}
The main purpose of the present paper is to construct Riemannian almost
product structures on the $(1,1)-$tensor bundle equipped with
Cheeger-Gromoll type metric over a Riemannian manifold and present some
results concerning with these structures.
\end{abstract}

\maketitle

\section{\protect\bigskip \textbf{Introduction}}

\noindent A Riemannian metric on tangent bundle of a Riemannian manifold had
been defined by E. Musso and F. Tricerri \cite{Musso} who, inspired by the
paper \cite{Cheeger} of J. Cheeger and D. Gromoll, called it the
Cheeger-Gromoll metric. The metric was defined by J. Cheeger and D. Gromoll;
yet, there were E. Musso and F. Tricerri who wrote down its expression,
constructed it in a more \textquotedblright
comprehensible\textquotedblright\ way, and gave it the name. The Levi-Civita
connection of the Cheeger-Gromoll metric and its Riemannian curvature tensor
are calculated by M. Sekizawa in \cite{Sekizawa} (for more details see \cite%
{Gudmundsson}). In \cite{Peyghan}, the metric was considered on $(1,1)$%
-tensor bundle by E. Peyghan and his collaborates and examined its some
geometric properties. In this paper, our aim is to study some almost product
structures on the $(1,1)-$tensor bundle endowed with the Cheeger-Gromoll
type metric. We get sequently conditions for the $(1,1)-$tensor bundle
endowed with the Cheeger-Gromoll type metric and some almost product
structures to be locally decomposable Riemannian manifold and Riemannian
almost product $\mathcal{W}_{3}-$manifold. Finally, we consider a product
conjugate connection on the $(1,1)-$tensor bundle with the Cheeger-Gromoll
type metric and give some results related to the connection.

Throughout this paper, all manifolds, tensor fields and connections are
always assumed to be differentiable of class $C^{\infty }$. Also, we denote
by $\Im _{q}^{p}(M)$ the set of all tensor fields of type $(p,q)$ on $M$,
and by $\Im _{q}^{p}(T_{1}^{1}(M))$ the corresponding set on the $(1,1)-$%
tensor bundle $T_{1}^{1}(M)$.

\section{Preliminaries}

Let $M$ be a differentiable manifold of class $C^{\infty }$ and finite
dimension $n$. Then the set $T_{1}^{1}(M)=\underset{P\in M}{\cup }%
T_{1}^{1}(P)$ is, by definition, the tensor bundle of type $(1,1)$ over $M$,
where $\cup $ denotes the disjoint union of the tensor spaces $T_{1}^{1}(P)$
for all $P\in M$. For any point $\tilde{P}$ of $T_{1}^{1}(M)$ such that $%
\tilde{P}\in T_{1}^{1}(M)$, the surjective correspondence $\tilde{P}%
\rightarrow P$ determines the natural projection $\pi
:T_{1}^{1}(M)\rightarrow M$. The projection $\pi $ defines the natural
differentiable manifold structure of $T_{1}^{1}(M)$, that is, $T_{1}^{1}(M)$
is a $C^{\infty }$-manifold of dimension $n+n^{2}$. If $x^{j}$ are local
coordinates in a neighborhood $U$ of $P\in M$, then a tensor $t$ at $P$
which is an element of $T_{1}^{1}(M)$ is expressible in the form $%
(x^{j},t_{j}^{i})$, where $t_{j}^{i}$ are components of $t$ with respect to
the natural base. We may consider $(x^{j},t_{j}^{i})=(x^{j},x^{\bar{j}%
})=(x^{J})$, $j=1,...,n$, $\bar{j}=n+1,...,n+n^{2}$, $J=1,...,n+n^{2}$ as
local coordinates in a neighborhood $\pi ^{-1}(U)$.

Let $X=X^{i}\frac{\partial }{\partial x^{i}}$ and $A=A_{j}^{i}\frac{\partial 
}{\partial x^{i}}\otimes dx^{j}$ be the local expressions in $U$ of a vector
field $X$ \ and a $(1,1)$ tensor field $A$ on $M$, respectively. Then the
vertical lift $^{V}A$ of $A$ and the horizontal lift $^{H}X$ of $X$ are
given, with respect to the induced coordinates, by\noindent 
\begin{equation}
{}^{V}A=\left( 
\begin{array}{l}
{{}^{V}A^{j}} \\ 
{{}^{V}A^{\overline{j}}}%
\end{array}%
\right) =\left( 
\begin{array}{c}
{0} \\ 
{A_{j}^{i}}%
\end{array}%
\right) ,  \label{A2.1}
\end{equation}%
and

\begin{equation}
^{H}X=\left( 
\begin{array}{c}
{{}^{H}X^{j}} \\ 
{{}^{H}X^{\overline{j}}}%
\end{array}%
\right) =\left( 
\begin{array}{c}
X{^{j}} \\ 
X{^{s}(\Gamma _{sj}^{m}t_{m}^{i}-\Gamma _{sm}^{i}t_{j}^{m})}%
\end{array}%
\right) ,  \label{A2.2}
\end{equation}%
where $\Gamma _{ij}^{h}$ are the coefficients of the connection $\nabla $ on 
$M$.

Let $\varphi \in \Im _{1}^{1}(M)$. The global vector fields $\gamma \varphi $
and $\tilde{\gamma}\varphi \in \Im _{0}^{1}(T_{1}^{1}(M))$ are respectively
defined by%
\begin{equation*}
{\gamma \varphi =\left( 
\begin{array}{c}
{0} \\ 
{t_{j}^{m}\varphi _{m}^{i}}%
\end{array}%
\right) ,\tilde{\gamma}\varphi =\left( 
\begin{array}{c}
{0} \\ 
{(t_{m}^{i}\varphi _{j}^{m})}%
\end{array}%
\right) }
\end{equation*}%
with respect to the coordinates $(x^{j},x^{\bar{j}})$ in $T_{1}^{1}(M)$,
where $\varphi _{j}^{i}$ are the components of $\varphi $.

The Lie bracket operation of vertical and horizontal vector fields on $%
T_{1}^{1}(M)$ is given by the formulas%
\begin{equation}
\left\{ 
\begin{array}{l}
{\left[ {}^{H}X,{}^{H}Y\right] ={}^{H}\left[ X,Y\right] +(\tilde{\gamma}%
-\gamma )R(X},{Y}), \\ 
{\left[ {}^{H}X,{}^{V}A\right] ={}^{V}(\nabla _{X}A),} \\ 
{\left[ {}^{V}A,{}^{V}B\right] =0}%
\end{array}%
\right.  \label{A2.3}
\end{equation}%
for any $X,$ $Y$ $\in \Im _{0}^{1}(M)$ and $A$, $B\in \Im _{1}^{1}(M)$,
where $R$ is the curvature tensor field of the connection $\nabla $ on $M$
defined by $R\left( X,Y\right) =\left[ \nabla _{X},\nabla _{Y}\right]
-\nabla _{\left[ X,Y\right] }$ \ and ${(\tilde{\gamma}-\gamma )R(X},{Y}%
)=\left( 
\begin{array}{c}
0 \\ 
{t_{m}^{i}R}_{klj}^{\text{ \ \ \ }m}X^{k}Y^{l}{-t_{j}^{m}R}_{klm}^{\text{ \
\ \ }i}X^{k}Y^{l}%
\end{array}%
\right) $ (for details, see \cite{Cengiz,Salimov3}).

\section{\protect\bigskip Riemannian almost product structures on the $%
(1,1)- $tensor bundle with Cheeger-Gromoll type metric}

An $n-$dimensional manifold $M$ in which a $(1,1)$ tensor field $\varphi $
satisfying $\varphi ^{2}=id$, $\varphi \neq \pm id$ is given is called an
almost product manifold. \ A Riemannian almost product manifold $(M,\varphi
,g)$ is a manifold $M$ with an almost product structure $\varphi $ and a
Riemannian metric $g$ such that%
\begin{equation}
g(\varphi X,Y)=g(X,\varphi Y)  \label{B3.0}
\end{equation}%
for all $X,Y\in \Im _{0}^{1}(M)$. Also, the condition (\ref{B3.0}) is
referred as purity condition for $g$ with respect to $\varphi $. The almost
product structure $\varphi $ is integrable, i.e. the Nijenhuis tensor $%
N_{\varphi }$, determined by%
\begin{equation*}
N_{\varphi }(X,Y)=\left[ \varphi X,\varphi Y\right] -\varphi \left[ \varphi
X,Y\right] -\varphi \left[ X,\varphi Y\right] +\left[ X,Y\right] 
\end{equation*}%
for all $X,Y\in \Im _{0}^{1}(M)$ is zero then the Riemannian almost product
manifold $(M,\varphi ,g)$ is called a Riemannian product manifold. A locally
decomposable Riemannian manifold can be defined as a triple $(M,\varphi ,g)$
which consists of a smooth manifold $M$ endowed with an almost product
structure $\varphi $ and a pure metric $g$ such that $\nabla \varphi =0$,
where $\nabla $ is the Levi-Civita connection of $g$. It is well known that
the condition $\nabla \varphi =0$ is equivalent to decomposability of the
pure metric $g$ \cite{SalimovAkbulut}, i.e. $\Phi _{\varphi }g=0$, where $%
\Phi _{\varphi }$ is the Tachibana operator \cite{Tachibana,YanoAko}: $(\Phi
_{\varphi }g)(X,Y,Z)=(\varphi X)({}g(Y,Z))-X(g(\varphi Y,Z))+g((L_{Y}\varphi
)X,Z)+g(Y,(L_{Z}$ $\varphi )X)\,$.

Let $T_{1}^{1}(M)$ be the $(1,1)-$tensor bundle over a Riemannian manifold $%
(M,g)$. For each $P\in M$, the extension of scalar product $g$ (marked by $G$%
) is defined on the tensor space $\pi ^{-1}(P)=T_{1}^{1}(P)$ by $%
G(A,B)=g_{it}g^{jl}A_{j}^{i}B_{l}^{t}$ for all $A,$ $B\in \Im _{1}^{1}\left(
P\right) $. The Cheeger-Gromoll type metric $^{CG}g$ is defined on $%
T_{1}^{1}(M)$ by the following three equations:%
\begin{equation}
{}^{CG}g\left( {}^{H}X,{}^{H}Y\right) =\text{ }^{V}({}g\left( X,Y\right) ),
\label{B3.1}
\end{equation}%
\begin{equation}
{}^{CG}g\left( {}^{V}A,{}^{H}Y\right) =0,  \label{B3.2}
\end{equation}%
\begin{equation}
^{CG}g\left( {}^{V}A,{}^{V}B\right) =\frac{1}{\alpha }{}%
^{V}(G(A,B)+G(A,t)G(B,t))  \label{B3.3}
\end{equation}%
for any $X,Y\in \Im _{0}^{1}\left( M\right) $ and $A,B\in \Im _{1}^{1}\left(
M\right) $, where $r^{2}=G\left( t,t\right) =g_{it}g^{jl}t_{j}^{i}t_{l}^{t}$
\ and $\alpha =1+r^{2}$. For the Levi-Civita connection of the
Cheeger-Gromoll type metric $^{CG}g$ we give the next theorem.

\begin{theorem}
\label{propo1} Let $(M,g)$ be a Riemannian manifold\textit{\ and }${}%
\widetilde{\nabla }$\textit{\ be the Levi-Civita connection of the tensor
bundle }$T_{1}^{1}(M)$\textit{\ equipped with the Cheeger-Gromoll type
metric ${}$}$^{CG}g$\textit{. }Then the corresponding Levi-Civita connection
satisfies the following relations: 
\begin{equation*}
\begin{array}{l}
{i)\mathrm{\;\;}{}}\widetilde{{\nabla }}{_{{}^{H}X}{}^{H}Y={}^{H}\left(
\nabla _{X}Y\right) +{\frac{1}{2}}(\tilde{\gamma}-\gamma )R(X,Y),} \\ 
{ii)\mathrm{\;\;}{}}\widetilde{{\nabla }}{_{{}^{H}X}{}^{V}B={}{\frac{1}{%
2\alpha }}{}^{H}\left( g^{bj}\,R(t_{b},B_{j})X+g_{ai}\,(t^{a}(g^{-1}\circ
R(\quad ,X)\tilde{B}^{\,i}\right) +{}^{V}\left( \nabla _{X}B\right) ,} \\ 
{iii)\mathrm{\;\;}}\widetilde{{\nabla }}{_{{}^{V}A}{}^{H}Y={\frac{1}{2\alpha 
}}{}^{H}\left( g^{bl}\,R(t_{b},A_{l})Y+g_{at}(t^{a}\,(g^{-1}\circ R(\quad ,Y)%
\widetilde{A}^{\,t})\right) ,} \\ 
{iv)\mathrm{\;\;\;}{}}\widetilde{{\nabla }}{_{{}^{V}A}{}^{V}B=-\frac{1}{%
\alpha }\left( {}^{CG}g\left( {}^{V}A,^{V}t\right) {}^{V}B+{}^{CG}g\left(
{}^{V}B,^{V}t\right) {}^{V}A\right) } \\ 
{+\frac{\alpha +1}{\alpha }{}^{CG}g\left( {}^{V}A,{}^{V}B\right) }^{V}t{-%
\frac{1}{\alpha }{}^{CG}g\left( {}^{V}A,^{V}t\right) {}^{CG}g\left(
{}^{V}B,^{V}t\right) }^{V}t%
\end{array}%
\end{equation*}%
\noindent for all $X,\;Y\in \Im _{0}^{1}(M)$ and $A,\;B\in \Im _{1}^{1}(M)$,
where $A_{l}=(A_{l}^{\,\,i})$, $\widetilde{A}^{t}=(g^{bl}A_{l}^{\quad
t})=(A_{\,.}^{b\,t})$, $t_{l}=(t_{l}^{\;a})$, $t^{a}=(t_{b}^{\;a})$, $%
R(\;\;,X)Y\in \Im _{1}^{1}(M),$ $g^{-1}\circ R(\;\;,X)Y\in \Im _{0}^{1}(M)$
(see, also \cite{Peyghan}).\bigskip
\end{theorem}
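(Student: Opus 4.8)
The plan is to compute $\widetilde{\nabla}$ directly from the Koszul formula
\[
2\,{}^{CG}g(\widetilde{\nabla}_{U}V,W)={}U({}^{CG}g(V,W))+V({}^{CG}g(W,U))-W({}^{CG}g(U,V))+{}^{CG}g([U,V],W)-{}^{CG}g([V,W],U)+{}^{CG}g([W,U],V),
\]
taking $U,V,W$ from the local frame of horizontal lifts ${}^{H}X$ and vertical lifts ${}^{V}A$: these span $\Im_{0}^{1}(T_{1}^{1}(M))$ at each point and ${}^{CG}g$ is nondegenerate, so it suffices to evaluate the right-hand side for all such $W$. Besides the bracket identities (\ref{A2.3}) and the metric values (\ref{B3.1})--(\ref{B3.3}), I would first record the elementary differentiation rules that do all the work: for $f\in C^{\infty}(M)$ one has ${}^{H}X({}^{V}f)={}^{V}(Xf)$ and ${}^{V}A({}^{V}f)=0$; since $\nabla g=0$, the function $\alpha=1+r^{2}$ is horizontally constant, ${}^{H}X(\alpha)=0$, whereas ${}^{V}A(\alpha)=2\,{}^{V}(G(A,t))$; and, for $B,C\in\Im_{1}^{1}(M)$, ${}^{H}X({}^{V}(G(B,t)))={}^{V}(G(\nabla_{X}B,t))$, ${}^{V}A({}^{V}(G(B,t)))={}^{V}(G(A,B))$, ${}^{V}A({}^{V}(G(B,C)))=0$. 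These combine, via (\ref{B3.3}), into the statement that ${}^{CG}g$ is parallel along horizontal lifts, ${}^{H}X({}^{CG}g({}^{V}A,{}^{V}B))={}^{CG}g({}^{V}(\nabla_{X}A),{}^{V}B)+{}^{CG}g({}^{V}A,{}^{V}(\nabla_{X}B))$ and similarly for horizontal--horizontal arguments. Finally I would note two pointwise identities that let the answers come out in invariant form: ${}^{CG}g({}^{V}A,{}^{V}t)={}^{V}(G(A,t))$, immediate from (\ref{B3.3}) and $\alpha=1+r^{2}$, and $G((\tilde{\gamma}-\gamma)R(X,Y),t)=0$, from the skew-symmetry of $R$ against the symmetric contraction by $t$.

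Now the four cases. In (i) both arguments are horizontal: pairing against ${}^{H}Z$ and using (\ref{B3.1}) together with metric-compatibility of $\nabla$, the six terms assemble into the ordinary Koszul expression for $\nabla_{X}Y$ on $M$, while the curvature part of $[{}^{H}X,{}^{H}Y]$ contributes nothing by (\ref{B3.2}); pairing against ${}^{V}C$, all terms cancel except ${}^{CG}g((\tilde{\gamma}-\gamma)R(X,Y),{}^{V}C)$, whence the vertical component is $\tfrac12(\tilde{\gamma}-\gamma)R(X,Y)$. In (ii), with $U={}^{H}X$ and $V={}^{V}B$: pairing against ${}^{V}C$, the brackets $[{}^{H}X,{}^{V}C]$ and $[{}^{V}C,{}^{H}X]$ contribute only through ${}^{V}(\nabla_{X}C)$ while $[{}^{V}B,{}^{V}C]=0$, and horizontal parallelism of ${}^{CG}g$ together with its symmetry collapses the whole expression to ${}^{CG}g({}^{V}(\nabla_{X}B),{}^{V}C)$, giving the vertical part ${}^{V}(\nabla_{X}B)$; pairing against ${}^{H}Z$, only the curvature part of $[{}^{H}Z,{}^{H}X]$ survives, so $2\,{}^{CG}g(\widetilde{\nabla}_{{}^{H}X}{}^{V}B,{}^{H}Z)=-{}^{CG}g((\tilde{\gamma}-\gamma)R(X,Z),{}^{V}B)=-\tfrac1{\alpha}{}^{V}(G((\tilde{\gamma}-\gamma)R(X,Z),B))$, using $G((\tilde{\gamma}-\gamma)R(X,Z),t)=0$. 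Here is where the actual labour is: this last contraction pairs the two pieces $t_{m}^{i}R_{klj}{}^{m}X^{k}Z^{l}$ and $-t_{j}^{m}R_{klm}{}^{i}X^{k}Z^{l}$ of $(\tilde{\gamma}-\gamma)R(X,Z)$ against $B_{l}^{t}$ through $g_{it}g^{jl}$, and one must raise and lower indices with $g$ and use $R_{klij}=-R_{klji}$ (and, if convenient, the first Bianchi identity) to bring it to the form $2\,{}^{V}(g(W,Z))$ with $W$ the horizontal vector field displayed in (ii); comparison then gives (ii). Case (iii) I would not recompute: torsion-freeness and $[{}^{V}A,{}^{H}Y]=-{}^{V}(\nabla_{Y}A)$ give $\widetilde{\nabla}_{{}^{V}A}{}^{H}Y=\widetilde{\nabla}_{{}^{H}Y}{}^{V}A-{}^{V}(\nabla_{Y}A)$, and the two vertical terms cancel, leaving the horizontal vector of (ii) with $(X,B)$ replaced by $(Y,A)$, which is exactly (iii).

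For (iv), with $U={}^{V}A$ and $V={}^{V}B$, all three brackets vanish. Pairing against ${}^{H}Z$, the surviving three terms are $-{}^{H}Z({}^{CG}g({}^{V}A,{}^{V}B))+{}^{CG}g({}^{V}(\nabla_{Z}A),{}^{V}B)+{}^{CG}g({}^{V}(\nabla_{Z}B),{}^{V}A)$, which is zero by horizontal parallelism of ${}^{CG}g$, so $\widetilde{\nabla}_{{}^{V}A}{}^{V}B$ is vertical. Pairing against ${}^{V}C$ then reduces to a purely fibrewise computation: differentiating the three copies of ${}^{CG}g({}^{V}\cdot,{}^{V}\cdot)=\tfrac1{\alpha}{}^{V}(G(\cdot,\cdot)+G(\cdot,t)G(\cdot,t))$ with ${}^{V}A(\alpha)=2\,{}^{V}(G(A,t))$ and ${}^{V}A({}^{V}(G(B,t)))={}^{V}(G(A,B))$, several $t$-linear terms cancel and one is left with $-\tfrac1{\alpha}{}^{V}(G(A,t))\,{}^{CG}g({}^{V}B,{}^{V}C)-\tfrac1{\alpha}{}^{V}(G(B,t))\,{}^{CG}g({}^{V}A,{}^{V}C)+\tfrac1{\alpha}{}^{V}(G(C,t))\,{}^{CG}g({}^{V}A,{}^{V}B)+\tfrac1{\alpha}{}^{V}(G(A,B))\,{}^{V}(G(C,t))$; substituting ${}^{V}(G(\cdot,t))={}^{CG}g({}^{V}\cdot,{}^{V}t)$ and rewriting $\tfrac1{\alpha}{}^{V}(G(A,B))={}^{CG}g({}^{V}A,{}^{V}B)-\tfrac1{\alpha}{}^{CG}g({}^{V}A,{}^{V}t)\,{}^{CG}g({}^{V}B,{}^{V}t)$ converts this into ${}^{CG}g$ of the right-hand side of (iv) against ${}^{V}C$, finishing the proof. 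The only genuine obstacle is the index manipulation in (ii): confirming that $G((\tilde{\gamma}-\gamma)R(X,Z),B)$ reassembles as $g(W,Z)$ for the stated $W$ is a matter of bookkeeping which slots of $R$ are raised or lowered and applying its skew-symmetry — mechanical but unforgiving; everything else is cancellation of terms one already expects to cancel.
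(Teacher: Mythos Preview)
Your proposal is correct and follows exactly the approach the paper indicates: the paper's own proof merely states that one applies the Koszul formula to pairs $\widetilde X={}^{H}X,{}^{V}A$, $\widetilde Y={}^{H}Y,{}^{V}B$, $\widetilde Z={}^{H}Z,{}^{V}C$, using the bracket identities (\ref{A2.3}), the definition (\ref{B3.1})--(\ref{B3.3}) of ${}^{CG}g$, and the Bianchi identities for $R$, and then explicitly says ``We omit standard calculations.'' You have simply carried those calculations out, including the auxiliary facts (horizontal constancy of $\alpha$, the identity $G((\tilde\gamma-\gamma)R(X,Y),t)=0$, and the torsion-freeness shortcut for (iii)) that make the bookkeeping tractable.
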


\begin{proof}
The connection \bigskip ${}\widetilde{{\nabla }}$ is characterized by the
Koszul formula:%
\begin{eqnarray*}
2^{CG}g({}\widetilde{{\nabla }}_{\widetilde{X}}\widetilde{Y},\widetilde{Z})
&=&\widetilde{X}(^{CG}g(\widetilde{Y},\widetilde{Z}))+\widetilde{Y}(^{CG}g(%
\widetilde{Z},\widetilde{X}))-\widetilde{Z}(^{CG}g(\widetilde{X},\widetilde{Y%
})) \\
-^{CG}g(\widetilde{X},[\widetilde{Y},\widetilde{Z}]) &+&^{CG}g(\widetilde{Y}%
,[\widetilde{Z},\widetilde{X}])+\text{ }^{CG}g(\widetilde{Z},[\widetilde{X},%
\widetilde{Y}])
\end{eqnarray*}%
for all vector fields $\widetilde{X},\widetilde{Y}$ and $\widetilde{Z}$ on $%
T_{1}^{1}(M)$. One can verify the Koszul formula for pairs $\widetilde{X}=$ $%
^{H}X,$ $^{V}A$ and $\widetilde{Y}=$ $^{H}Y,$ $^{V}A$ and $\widetilde{Z}=$ $%
^{H}Z,$ $^{V}C$. In calculations, the formulas (\ref{A2.3}), definition of
Cheeger-Gromoll type metric and the Bianchi identities for $R$ should be
applied. We omit standart calculations.
\end{proof}

The diagonal lift $^{D}\gamma $ of $\gamma \in \Im _{1}^{1}(M)$ to $%
T_{1}^{1}(M)$ is defined by%
\begin{eqnarray*}
^{D}\gamma ^{H}X &=&^{H}(\gamma (X)) \\
^{D}\gamma ^{V}A &=&-^{V}(A\circ \gamma )
\end{eqnarray*}%
for any $X\in \Im _{0}^{1}\left( M\right) $ and $A\in \Im _{1}^{1}\left(
M\right) $, where $A\circ \gamma =C(A\otimes \gamma )=A_{j}^{m}t_{m}^{i}$ 
\cite{Gezer2}. The diagonal lift $^{D}I$ of the identity tensor field $I\in
\Im _{1}^{1}(M)$ has the following properties:%
\begin{eqnarray*}
^{D}I^{H}X &=&^{H}X \\
^{D}I^{V}A &=&-^{V}A
\end{eqnarray*}%
and satisfies $(^{D}I)^{2}=I_{T_{1}^{1}(M)}$. Thus, $^{D}I$ is an almost
product structure on $T_{1}^{1}(M)$. We compute%
\begin{equation*}
P(\widetilde{X},\widetilde{Y})=^{CG}g(^{D}I\widetilde{X},\widetilde{Y}%
)-^{CG}g(\widetilde{X},^{D}I\widetilde{Y})
\end{equation*}%
for all $\widetilde{X},\widetilde{Y}$ $\in \Im _{0}^{1}\left(
T_{1}^{1}(M)\right) $. For pairs $\widetilde{X}=^{H}X,^{V}A$ and $\widetilde{%
Y}=^{H}Y,^{V}B$, by virtue of (\ref{B3.1})-(\ref{B3.3}), we get $P(%
\widetilde{X},\widetilde{Y})=0$, i.e. the Cheeger-Gromoll type metric $%
^{CG}g $ is pure with respect to the almost product structure $^{D}I$. Hence
we state the following theorem.

\begin{theorem}
\label{Theo1}Let $(M,g)$ be a Riemannian manifold and $T_{1}^{1}(M)$ be its $%
(1,1)-$tensor bundle equipped with the Cheeger-Gromoll type metric $^{CG}g$
and the almost product structure $^{D}I$. The triple $%
(T_{1}^{1}(M),^{D}I,^{CG}g)$ is a Riemannian almost product manifold.
\end{theorem}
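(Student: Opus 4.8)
The plan is to check, directly from the definitions, the two requirements in the notion of a Riemannian almost product manifold: that $^{D}I$ is an almost product structure, and that $^{CG}g$ is pure with respect to $^{D}I$, i.e. satisfies the purity condition (\ref{B3.0}).

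For the first requirement I would simply invoke the identities $^{D}I\,{}^{H}X={}^{H}X$ and $^{D}I\,{}^{V}A=-{}^{V}A$ recorded just above the statement: these give at once $(^{D}I)^{2}=I_{T_{1}^{1}(M)}$, and since $\dim M=n\geq 1$ the horizontal and vertical distributions are both nontrivial, so $^{D}I\neq \pm I_{T_{1}^{1}(M)}$. Hence $^{D}I$ is an almost product structure on $T_{1}^{1}(M)$.

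For the purity I would use the tensor field $P(\widetilde{X},\widetilde{Y})={}^{CG}g(^{D}I\widetilde{X},\widetilde{Y})-{}^{CG}g(\widetilde{X},{}^{D}I\widetilde{Y})$ introduced in the paragraph preceding the theorem. Since $P$ is linear over smooth functions in each argument, it suffices to evaluate it on pairs of horizontal and vertical lifts, which span the tangent spaces of $T_{1}^{1}(M)$; and since $P$ is skew-symmetric (by the symmetry of $^{CG}g$), only the three pairs $({}^{H}X,{}^{H}Y)$, $({}^{H}X,{}^{V}B)$, $({}^{V}A,{}^{V}B)$ need to be treated. In each case one substitutes the action of $^{D}I$ on lifts and uses (\ref{B3.1})-(\ref{B3.3}): for $({}^{H}X,{}^{H}Y)$ both terms reduce to $^{V}(g(X,Y))$; for $({}^{H}X,{}^{V}B)$ both vanish by (\ref{B3.2}); for $({}^{V}A,{}^{V}B)$ both reduce to $-{}^{CG}g({}^{V}A,{}^{V}B)$. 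Thus $P$ vanishes identically, which is precisely the purity condition (\ref{B3.0}), and combined with the first step this shows that $(T_{1}^{1}(M),{}^{D}I,{}^{CG}g)$ is a Riemannian almost product manifold.

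I do not expect any genuine obstacle here: the argument is a short formal substitution into (\ref{B3.1})-(\ref{B3.3}), exactly as already sketched in the text before the theorem. The only two points deserving a sentence are the reduction to the three lift-pairs (function-linearity together with skew-symmetry of $P$) and the observation that the horizontal and vertical distributions are both nonzero, so that $^{D}I\neq \pm id$; I would make both explicit and then conclude.
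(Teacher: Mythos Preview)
Your proposal is correct and follows essentially the same approach as the paper: the paper's argument, given in the paragraph immediately preceding the theorem, is precisely to evaluate $P(\widetilde{X},\widetilde{Y})={}^{CG}g({}^{D}I\widetilde{X},\widetilde{Y})-{}^{CG}g(\widetilde{X},{}^{D}I\widetilde{Y})$ on lift pairs using (\ref{B3.1})--(\ref{B3.3}) and to observe that it vanishes. Your version is slightly more explicit (invoking skew-symmetry of $P$ to reduce to three cases and noting ${}^{D}I\neq\pm id$), but the method and the computations are the same.
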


We now give conditions for the Cheeger-Gromoll type metric $^{CG}g$ to be
decomposable with respect to the almost product structure $^{D}I$. We
calculate 
\begin{eqnarray*}
(\Phi _{^{D}I}{}^{CG}g)(\tilde{X},\tilde{Y},\tilde{Z}) &=&(^{D}I\tilde{X}%
)({}^{CG}g(\tilde{Y},\tilde{Z}))-\tilde{X}(^{CG}g(^{D}I\tilde{Y},\tilde{Z}))
\\
&+&{}^{CG}g((L_{\tilde{Y}}\text{ }^{D}I)\tilde{X},\tilde{Z})+{}^{CG}g_{f}(%
\tilde{Y},(L_{\tilde{Z}}\text{ }^{D}I)\tilde{X})
\end{eqnarray*}%
for all $\tilde{X},\tilde{Y},\tilde{Z}\in \Im _{0}^{1}(T_{1}^{1}(M))$. For
pairs $\tilde{X}=^{H}X,^{V}A$, $\widetilde{Y}=^{H}Y,^{V}B$ and $\widetilde{Z}%
=^{H}Z,{}^{V}C$, then we get%
\begin{eqnarray}
(\Phi _{^{D}I}{}^{CG}g)({}^{H}X,{}^{V}B,{}^{H}Z) &=&2^{CG}g({}^{V}B,{(\tilde{%
\gamma}-\gamma )}R(Z,X)),  \label{B3.4} \\
(\Phi _{^{D}I}{}^{CG}g)({}^{H}X,{}^{H}Y,{}^{V}C) &=&2^{CG}g({(\tilde{\gamma}%
-\gamma )R(Y},{X}),^{V}C),  \notag \\
Otherwise &=&0.  \notag
\end{eqnarray}%
Therefore, we have the following result.

\begin{theorem}
\label{Theo2}Let $(M,g)$ be a Riemannian manifold and let $T_{1}^{1}(M)$ be
its $(1,1)-$tensor bundle equipped with the Cheeger-Gromoll type metric $%
^{CG}g$ and the almost product structure $^{D}I$. The triple $\left(
T_{1}^{1}(M),^{D}I,{}^{CG}g\right) $ is a locally decomposable Riemannian
manifold if and only if $M$ is flat.
\end{theorem}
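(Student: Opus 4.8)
The plan is to run the statement through the machinery already assembled in the excerpt. By Theorem~\ref{Theo1} the triple $(T_{1}^{1}(M),{}^{D}I,{}^{CG}g)$ is a Riemannian almost product manifold, so ${}^{CG}g$ is pure with respect to ${}^{D}I$; hence, by the characterisation of local decomposability recalled above (a pure metric is decomposable iff $\widetilde{\nabla}\,{}^{D}I=0$ iff $\Phi_{{}^{D}I}\,{}^{CG}g=0$), the whole proof reduces to deciding when the identities collected in (\ref{B3.4}) all vanish.

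First I would record that ${}^{CG}g$ is positive definite, in particular nondegenerate, on the vertical distribution: by (\ref{B3.3}), ${}^{CG}g({}^{V}A,{}^{V}A)=\frac{1}{\alpha}\bigl(G(A,A)+G(A,t)^{2}\bigr)$ with $\alpha=1+r^{2}>0$ and $G$ the positive definite extension of $g$ to the fibre. Consequently ${}^{CG}g({}^{V}B,(\tilde{\gamma}-\gamma)R(Z,X))=0$ for every vertical ${}^{V}B$ if and only if $(\tilde{\gamma}-\gamma)R(Z,X)=0$, and the same holds for the second line of (\ref{B3.4}). Thus $\Phi_{{}^{D}I}\,{}^{CG}g=0$ if and only if $(\tilde{\gamma}-\gamma)R(X,Y)=0$ for all $X,Y\in\Im_{0}^{1}(M)$ and at \emph{every} point $(P,t)$ of $T_{1}^{1}(M)$.

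The main point — and the only step that is not bookkeeping — is to pass from this to $R=0$. The subtlety is precisely that decomposability must hold at every point of the total space, i.e. for every value of the fibre coordinate $t$; one cannot merely test at $t=I$, where $t_{m}^{i}R_{klj}{}^{m}X^{k}Y^{l}-t_{j}^{m}R_{klm}{}^{i}X^{k}Y^{l}$ vanishes identically and carries no information. Since $(\tilde{\gamma}-\gamma)R(X,Y)$ is linear in $t$, I would plug in the elementary tensors $t=E_{(b)(a)}$ with $(E_{(b)(a)})_{j}^{i}=\delta_{b}^{i}\delta_{j}^{a}$, which turns the vanishing into
\begin{equation*}
\delta_{b}^{i}\,R_{klj}{}^{a}X^{k}Y^{l}-\delta_{j}^{a}\,R_{klb}{}^{i}X^{k}Y^{l}=0\qquad\text{for all }i,j,a,b.
\end{equation*}
Taking $i=b$ and $j\neq a$ (possible for $n\geq2$; the case $n=1$ is trivial, $M$ being automatically flat) gives $R_{klj}{}^{a}X^{k}Y^{l}=0$ whenever $a\neq j$. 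For the remaining components I would lower the upper index, set $S_{jm}:=R_{kljm}X^{k}Y^{l}$, observe that the previous step forces $S_{jm}=g_{jm}c_{j}$ with $c_{j}:=R_{klj}{}^{j}X^{k}Y^{l}$ (no summation on $j$), and combine this with the skew-symmetry $S_{jm}=-S_{mj}$ of the curvature tensor to get $g_{jm}(c_{j}+c_{m})=0$; putting $m=j$ and using $g_{jj}>0$ yields $c_{j}=0$. Hence $R_{klj}{}^{a}X^{k}Y^{l}=0$ for all indices and all $X,Y$, so $R\equiv0$ and $M$ is flat.

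Conversely, if $M$ is flat then $R=0$, so all the identities in (\ref{B3.4}) vanish, $\Phi_{{}^{D}I}\,{}^{CG}g=0$, and together with the purity supplied by Theorem~\ref{Theo1} this says $(T_{1}^{1}(M),{}^{D}I,{}^{CG}g)$ is locally decomposable. So the argument is really a single implication plus its trivial converse, and the only genuine obstacle is recognising that the freedom in the fibre variable $t$ has to be exploited — a single fibre value will not suffice — after which the conclusion is linear algebra with the standard curvature symmetries.
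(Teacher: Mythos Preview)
Your proposal is correct and follows the same route as the paper: reduce local decomposability to the vanishing of the Tachibana operator, invoke the identities~(\ref{B3.4}), and conclude that this is equivalent to $R=0$. The paper in fact stops at~(\ref{B3.4}) and asserts the theorem without further comment, so your careful extraction of $R=0$ from $(\tilde{\gamma}-\gamma)R(X,Y)=0$ for all fibre values $t$ --- including the observation that $t=I$ is uninformative and the use of the antisymmetry $R_{kljm}=-R_{klmj}$ to kill the diagonal terms --- supplies detail that the paper leaves implicit rather than departing from its argument.
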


\begin{remark}
Let $(M,g)$ be a flat Riemannian manifold. In the case the $(1,1)-$tensor
bundle $T_{1}^{1}(M)$ equipped with the Cheeger-Gromoll type metric $^{CG}g$
over the flat Riemannian manifold $(M,g)$ is unflat (see \cite{Peyghan}).
\end{remark}

Let $(M,\varphi ,g)$ be a non-integrable almost product manifold with a pure
metric. A Riemannian almost product manifold $(M,\varphi ,g)$ is a
Riemannian almost product $\mathcal{W}_{3}-$manifold if $\underset{X,Y,Z}{%
\sigma }g((\nabla _{X}\varphi )Y,Z)=0$, where $\sigma $ is the cyclic sum by 
$X,Y,Z$ \cite{Staikova}. In \cite{Salimov4}, the authors proved that $%
\underset{X,Y,Z}{\sigma }g((\nabla _{X}\varphi )Y,Z)=0$ is equivalent to $%
(\Phi _{\varphi }g)(X,Y,Z)+(\Phi _{\varphi }g)(Y,Z,X)+(\Phi _{\varphi
}g)(Z,X,Y)=0.$ We compute%
\begin{equation*}
A(\tilde{X},\tilde{Y},\tilde{Z})=(\Phi _{^{D}I}{}^{CG}g)(\tilde{X},\tilde{Y},%
\tilde{Z})+(\Phi _{^{D}I}{}^{CG}g)(\widetilde{Y},\widetilde{Z},\widetilde{X}%
)+(\Phi _{^{D}I}{}^{CG}g)(\widetilde{Z},\widetilde{X},\widetilde{Y})
\end{equation*}%
for all $\widetilde{X},\widetilde{Y},\widetilde{Z}\in \Im
_{0}^{1}(T_{1}^{1}(M)).$ By means of (\ref{B3.4}), we have $A(\tilde{X},%
\tilde{Y},\tilde{Z})=0$ for all $\widetilde{X},\widetilde{Y},\widetilde{Z}%
\in \Im _{0}^{1}(T_{1}^{1}(M)).$ Hence we state the following theorem.

\begin{theorem}
\label{Theo5}Let $(M,g)$ be a Riemannian manifold and $T_{1}^{1}(M)$ be its $%
(1,1)-$tensor bundle equipped with the Cheeger-Gromoll type metric $^{CG}g$
and the almost product structure $^{D}I$. The triple $%
(T_{1}^{1}(M),^{D}I,^{CG}g)$ is a Riemannian almost product $\mathcal{W}%
_{3}- $manifold.
\end{theorem}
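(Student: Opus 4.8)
The plan is to verify that the cyclic sum $A(\widetilde X,\widetilde Y,\widetilde Z)$ vanishes identically by reducing everything to the already-computed formulas \eqref{B3.4}. Since $(\Phi_{^{D}I}{}^{CG}g)$ is trilinear over $C^\infty(T_1^1(M))$ (the Tachibana operator of a pure metric with respect to an almost product structure is a tensor), it suffices to check the identity on the spanning pairs of lifts, i.e.\ for $\widetilde X,\widetilde Y,\widetilde Z$ each equal to a horizontal lift $^{H}X$ or a vertical lift $^{V}A$. There are only a few cases to run through, split according to how many of the three arguments are horizontal.

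First I would dispose of the two easy extremes. If all three of $\widetilde X,\widetilde Y,\widetilde Z$ are horizontal, or all three are vertical, then by \eqref{B3.4} every one of the three terms in $A$ is of the form ``$\text{Otherwise}$'' and hence zero, so $A\equiv 0$ trivially. Next, the genuinely interesting cases are the mixed ones. For two horizontal and one vertical, say $\widetilde X={}^{H}X$, $\widetilde Y={}^{H}Y$, $\widetilde Z={}^{V}C$, the three cyclic terms are $(\Phi_{^{D}I}{}^{CG}g)({}^{H}X,{}^{H}Y,{}^{V}C)$, $(\Phi_{^{D}I}{}^{CG}g)({}^{H}Y,{}^{V}C,{}^{H}X)$, and $(\Phi_{^{D}I}{}^{CG}g)({}^{V}C,{}^{H}X,{}^{H}Y)$. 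By \eqref{B3.4} the first equals $2\,{}^{CG}g({(\tilde\gamma-\gamma)R(Y,X)},{}^{V}C)$, the second equals $2\,{}^{CG}g({}^{V}C,{(\tilde\gamma-\gamma)R(X,Y)})$, and the third (the ``$\text{Otherwise}$'' slot) is $0$. Using the skew-symmetry $R(X,Y)=-R(Y,X)$, hence $(\tilde\gamma-\gamma)R(X,Y)=-(\tilde\gamma-\gamma)R(Y,X)$, together with the symmetry of ${}^{CG}g$, the surviving two terms cancel, so $A=0$. The case of one horizontal and two vertical, say $\widetilde X={}^{H}X$, $\widetilde Y={}^{V}B$, $\widetilde Z={}^{V}C$, is handled the same way: one cyclic term is $(\Phi_{^{D}I}{}^{CG}g)({}^{H}X,{}^{V}B,{}^{V}C)$, which is ``$\text{Otherwise}$'' $=0$; another is $(\Phi_{^{D}I}{}^{CG}g)({}^{V}B,{}^{V}C,{}^{H}X)$, also $0$; and the third, $(\Phi_{^{D}I}{}^{CG}g)({}^{V}C,{}^{H}X,{}^{V}B)$, is again of ``$\text{Otherwise}$'' type and vanishes — so $A\equiv 0$ outright. (One should double-check against \eqref{B3.4} that the only nonzero patterns are $({}^{H},{}^{V},{}^{H})$ and $({}^{H},{}^{H},{}^{V})$, which indeed have exactly two horizontal entries.)

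Having shown $A(\widetilde X,\widetilde Y,\widetilde Z)=0$ on all lifts, multilinearity gives $A\equiv 0$ on $\Im_0^1(T_1^1(M))$. By the equivalence recalled just before the statement — that $\sigma_{X,Y,Z}\,g((\nabla_X\varphi)Y,Z)=0$ is the same as the cyclic sum of $(\Phi_\varphi g)$ vanishing \cite{Salimov4} — we conclude $\underset{\widetilde X,\widetilde Y,\widetilde Z}{\sigma}\,{}^{CG}g((\widetilde\nabla_{\widetilde X}{}^{D}I)\widetilde Y,\widetilde Z)=0$. Combined with Theorem~\ref{Theo1} (that $(T_1^1(M),{}^{D}I,{}^{CG}g)$ is a Riemannian almost product manifold) and the observation that $^{D}I$ is in general non-integrable (its Nijenhuis tensor involves $R$, cf.\ \eqref{A2.3}), this exhibits the triple as a Riemannian almost product $\mathcal{W}_3$-manifold, completing the proof. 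The only real subtlety is bookkeeping: making sure the three cyclic rotations are matched correctly against the two nonzero lines of \eqref{B3.4} and that the sign from the antisymmetry of $R$ is tracked — there is no hard analytic step here, the content is entirely in the already-established formula \eqref{B3.4}.
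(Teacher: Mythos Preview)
Your proposal is correct and follows exactly the paper's approach: the paper's argument is the single line ``By means of (\ref{B3.4}), we have $A(\tilde X,\tilde Y,\tilde Z)=0$ for all $\widetilde X,\widetilde Y,\widetilde Z\in\Im_0^1(T_1^1(M))$,'' and you have simply spelled out the case analysis and the curvature-antisymmetry cancellation that makes this line work. The only additional content you supply beyond the paper is the explicit remark on tensoriality of $\Phi_{^{D}I}{}^{CG}g$ and the non-integrability observation, neither of which the paper bothers to state.
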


\begin{remark}
Let $(M,g)$ be a Riemannian manifold and let $T_{1}^{1}(M)$ be its $(1,1)-$%
tensor bundle equipped with the Cheeger-Gromoll type metric $^{CG}g.$
Another almost product structure on $T_{1}^{1}(M)$ is defined by the formulas%
\begin{eqnarray*}
J(^{H}X) &=&-^{H}X \\
J({}^{V}A) &=&^{V}A
\end{eqnarray*}%
for any $X\in \Im _{0}^{1}\left( M\right) $ and $A\in \Im _{1}^{1}\left(
M\right) $. The Cheeger-Gromoll type metric $^{CG}g$ is pure with respect to 
$J$, e.i. the triple $\left( T_{1}^{1}(M),J,{}^{CG}g\right) $ is a
Riemannian almost product manifold. Also, by using $\Phi -$operator, we can
say that the Cheeger-Gromoll type metric $^{CG}g$ is decomposable with
respect to $J$ if and only if $M$ is flat. Finally the triple $%
(T_{1}^{1}(M),J,^{CG}g)$ is another Riemannian almost product $\mathcal{W}%
_{3}-$manifold.
\end{remark}

O. Gil-Medrano and A.M. Naveira proved that both distributions of the almost
product structure on the Riemannian almost product manifold ($M,F,g)$ are
totally geodesic if and only if $\underset{X,Y,Z}{\sigma }g((\nabla
_{X}F)Y,Z)=0$ for any $X,Y,Z\in \Im _{0}^{1}(M)$ \cite{Gil}$.$ As a
consequence of Theorem \ref{Theo5}, we have the following.

\begin{corollary}
Both distributions of the Riemannian almost product manifold $(T_{1}^{1}(M),$
$^{D}I,^{CG}g)$ are totally geodesic.
\end{corollary}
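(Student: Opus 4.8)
The plan is to derive the corollary directly from Theorem \ref{Theo5} by invoking the Gil-Medrano--Naveira criterion quoted immediately before the statement. Recall that for a Riemannian almost product manifold $(M,F,g)$ the two complementary distributions determined by the projectors $\tfrac12(I+F)$ and $\tfrac12(I-F)$ are both totally geodesic precisely when $\underset{X,Y,Z}{\sigma }g((\nabla _{X}F)Y,Z)=0$. Applying this with $M$ replaced by $T_{1}^{1}(M)$, $F$ replaced by the almost product structure $^{D}I$, and $g$ replaced by the Cheeger-Gromoll type metric $^{CG}g$, the totally geodesic property for both distributions of $(T_{1}^{1}(M),{}^{D}I,{}^{CG}g)$ is equivalent to $\underset{\tilde X,\tilde Y,\tilde Z}{\sigma }{}^{CG}g((\widetilde{\nabla }_{\tilde X}{}^{D}I)\tilde Y,\tilde Z)=0$ for all $\tilde X,\tilde Y,\tilde Z\in \Im _{0}^{1}(T_{1}^{1}(M))$.

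So the only thing to check is that this cyclic-sum vanishing condition actually holds here. I would recall from the discussion preceding Theorem \ref{Theo5} (following \cite{Salimov4}) that $\underset{\tilde X,\tilde Y,\tilde Z}{\sigma }{}^{CG}g((\widetilde{\nabla }_{\tilde X}{}^{D}I)\tilde Y,\tilde Z)=0$ is equivalent to $A(\tilde X,\tilde Y,\tilde Z)=0$, where $A$ is the totally cyclic sum of $(\Phi _{^{D}I}{}^{CG}g)$. But Theorem \ref{Theo5} was proved exactly by establishing $A(\tilde X,\tilde Y,\tilde Z)=0$ for all $\tilde X,\tilde Y,\tilde Z$ (via the explicit formulas \eqref{B3.4} for $\Phi _{^{D}I}{}^{CG}g$, whose only nonzero components are the $(HVH)$- and $(HHV)$-type terms, which cancel in the cyclic sum). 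Hence the Gil-Medrano--Naveira condition is automatically satisfied, and both distributions are totally geodesic.

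In short, I would write: by Theorem \ref{Theo5} the triple $(T_{1}^{1}(M),{}^{D}I,{}^{CG}g)$ is a Riemannian almost product $\mathcal{W}_{3}$-manifold, which means precisely $\underset{\tilde X,\tilde Y,\tilde Z}{\sigma }{}^{CG}g((\widetilde{\nabla }_{\tilde X}{}^{D}I)\tilde Y,\tilde Z)=0$; then by the Gil-Medrano--Naveira result \cite{Gil} this cyclic identity is equivalent to both distributions of $^{D}I$ being totally geodesic. There is essentially no obstacle: the corollary is a one-line deduction combining Theorem \ref{Theo5} with the cited characterization, since all the computational work — evaluating $\Phi _{^{D}I}{}^{CG}g$ on pairs of horizontal and vertical lifts and checking the cyclic sum vanishes — was already carried out in the passage leading up to Theorem \ref{Theo5}. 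The only point worth a sentence is making the logical chain explicit, namely that the $\mathcal{W}_{3}$ condition, the vanishing of $A$, and the totally geodesic condition are all mutually equivalent for $(T_{1}^{1}(M),{}^{D}I,{}^{CG}g)$.
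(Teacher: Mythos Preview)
Your proposal is correct and follows exactly the paper's own reasoning: the corollary is deduced in one line from Theorem \ref{Theo5} via the Gil-Medrano--Naveira characterization \cite{Gil}, since the $\mathcal{W}_{3}$ condition established there is precisely the cyclic-sum identity equivalent to both distributions being totally geodesic. There is nothing to add or correct.
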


\section{Product conjugate connections on the $(1,1)-$tensor bundle with
Cheeger-Gromoll type metric}

Let $F$ be an almost product structure and $\nabla $ be a linear connection
on an $n$-dimensional Riemannian manifold $M.$ The product conjugate
connection $\nabla ^{(F)}$ of $\nabla $ is defined by%
\begin{equation*}
\nabla _{X}^{(F)}Y=F(\nabla _{X}FY)
\end{equation*}%
for all $X,Y\in \Im _{0}^{1}(M).$ If $(M,F,g)$ is a Riemannian almost
product manifold, then $(\nabla _{X}^{(F)}g)(FY,FZ)=(\nabla _{X}g)(Y,Z)$,
i.e. $\nabla $ is a metric connection with respect to $g$ if and only if $%
\nabla ^{(F)}$ is so. From this, we can say that if $\nabla $ is the
Levi-Civita connection of $g$, then $\nabla ^{(F)}$ is a metric connection
with respect to $g$ \cite{Blaga}.

By the almost product structure $^{D}I$ and the Levi-Civita connection $%
\widetilde{\nabla }$ given by Theorem \ref{propo1}, we write the product
conjugate connection $\widetilde{\nabla }^{(^{D}I)}$ of $\widetilde{\nabla }$
as follows:%
\begin{equation*}
\widetilde{\nabla }_{\widetilde{X}}^{(^{D}I)\text{ \ }}\widetilde{Y}=\text{ }%
^{D}I(\widetilde{\nabla }_{\widetilde{X}}\text{ }^{D}I\widetilde{Y})
\end{equation*}%
for all $\widetilde{X},\widetilde{Y}\in \Im _{0}^{1}(T_{1}^{1}(M)).$ Also
note that $\widetilde{\nabla }^{(^{D}I)}$ is a metric connection of the
Cheeger-Gromoll type metric $^{CG}g$. The standart calculations give the
following theorem.

\begin{theorem}
Let $(M,g)$ be a Riemannian manifold and let $T_{1}^{1}(M)$ be its $(1,1)-$%
tensor bundle equipped with the Cheeger-Gromoll type metric $^{CG}g$ and the
almost product structure $^{D}I$. Then the product conjugate connection (or
metric connection) $\widetilde{\nabla }^{(^{D}I)}$ satisfies 
\begin{equation*}
\begin{array}{l}
{i)\mathrm{\;\;}{}}\widetilde{{\nabla }}^{(^{D}I)}{_{^{H}X}{}^{H}Y={}^{H}%
\left( \nabla _{X}Y\right) -{\frac{1}{2}}(\tilde{\gamma}-\gamma )R(X,Y),} \\ 
{ii)\mathrm{\;}\widetilde{{\nabla }}}^{(^{D}I)}{{_{{}^{H}X}{}^{V}B=-{}{\frac{%
1}{2\alpha }}{}^{H}}}\left(
g^{bj}\,R(t_{b},B_{j})X+g_{ai}\,(t^{a}(g^{-1}\circ R(\quad ,X)\tilde{B}%
^{\,i})\right) {{+{}^{V}\left( \nabla _{X}B\right) },} \\ 
{iii)\mathrm{\;\;}\widetilde{{\nabla }}}^{(^{D}I)}{{_{{}^{V}A}{}^{H}Y={\frac{%
1}{2\alpha }}{}^{H}\left( g^{bl}\,R(t_{b},A_{l})Y+g_{at}(t^{a}\,(g^{-1}\circ
R(\quad ,Y)\widetilde{A}^{\,t}))\right) },} \\ 
{iv)\mathrm{\;\;\;}{}}\widetilde{{\nabla }}^{(^{D}I)}{_{{}^{V}A}{}^{V}B=-%
\frac{1}{\alpha }\left( {}^{CG}g\left( {}^{V}A,^{V}t\right)
{}^{V}B+{}^{CG}g\left( {}^{V}B,^{V}t\right) {}^{V}A\right) } \\ 
{+\frac{\alpha +1}{\alpha }{}^{CG}g\left( {}^{V}A,{}^{V}B\right) }^{V}t{-%
\frac{1}{\alpha }{}^{CG}g\left( {}^{V}A,^{V}t\right) {}^{CG}g\left(
{}^{V}B,^{V}t\right) }^{V}t{.}%
\end{array}%
\end{equation*}
\end{theorem}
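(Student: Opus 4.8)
The plan is to compute $\widetilde{\nabla}^{(^{D}I)}$ directly from its defining relation $\widetilde{\nabla}_{\widetilde{X}}^{(^{D}I)}\widetilde{Y}={}^{D}I(\widetilde{\nabla}_{\widetilde{X}}\,{}^{D}I\widetilde{Y})$, evaluating it on the four pairs $(\widetilde{X},\widetilde{Y})\in\{{}^{H}X,{}^{V}A\}\times\{{}^{H}Y,{}^{V}B\}$, since horizontal and vertical lifts locally span $\Im_0^1(T_1^1(M))$. The only ingredients needed are Theorem \ref{propo1} for $\widetilde{\nabla}$ together with three elementary facts about $^{D}I$: it fixes every horizontal lift, it negates every vertical lift, and consequently it negates the fields $(\tilde{\gamma}-\gamma)R(X,Y)$ and $^{V}t$ occurring in those formulas, each of which has vanishing horizontal component and is therefore a vertical vector field on which $^{D}I$ acts as $-\mathrm{id}$.

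For case (i) one has $^{D}I({}^{H}Y)={}^{H}Y$, so Theorem \ref{propo1}(i) gives $^{D}I$ applied to ${}^{H}(\nabla_X Y)+\frac{1}{2}(\tilde{\gamma}-\gamma)R(X,Y)$; the horizontal term is fixed and the curvature term changes sign, which yields the stated formula with $-\frac{1}{2}$. For case (iv), $^{D}I({}^{V}B)=-{}^{V}B$, and since every summand of $\widetilde{\nabla}_{{}^{V}A}{}^{V}B$ in Theorem \ref{propo1}(iv) is itself a vertical lift, the outer $^{D}I$ introduces a second sign change; the two cancel and $\widetilde{\nabla}^{(^{D}I)}$ agrees with $\widetilde{\nabla}$ on the vertical--vertical pair, as asserted. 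In the mixed case (ii), one starts from $^{D}I({}^{V}B)=-{}^{V}B$, applies Theorem \ref{propo1}(ii) to get a horizontal piece plus ${}^{V}(\nabla_X B)$, then applies $^{D}I$ (fixing the horizontal piece, negating the vertical one) and the overall minus sign, arriving at the displayed expression; in case (iii) both applications of $^{D}I$ act on purely horizontal data, so $\widetilde{\nabla}_{{}^{V}A}^{(^{D}I)}{}^{H}Y=\widetilde{\nabla}_{{}^{V}A}{}^{H}Y$ is unchanged.

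Since the argument is nothing more than bookkeeping of signs on horizontal versus vertical components, there is no genuine obstacle. The only point that needs attention is recognizing $(\tilde{\gamma}-\gamma)R(X,Y)$ and $^{V}t$ as true vertical vector fields, so that $^{D}I$ acts on them by $-1$; this is immediate from the coordinate expressions in Section 2. The remaining steps are the standard calculations alluded to in the statement, which we omit.
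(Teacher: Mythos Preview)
Your proposal is correct and follows exactly the route the paper intends: the paper itself offers no detailed proof, only the sentence ``The standard calculations give the following theorem,'' and those standard calculations are precisely the sign bookkeeping you carry out by applying $\widetilde{\nabla}_{\widetilde{X}}^{(^{D}I)}\widetilde{Y}={}^{D}I(\widetilde{\nabla}_{\widetilde{X}}\,{}^{D}I\widetilde{Y})$ to the four horizontal/vertical pairs and using Theorem~\ref{propo1}. Your observation that $(\tilde{\gamma}-\gamma)R(X,Y)$ and ${}^{V}t$ lie in the vertical distribution (on which ${}^{D}I$ acts as $-\mathrm{id}$ pointwise) is the only nontrivial check, and it is handled correctly.
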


\begin{remark}
Let $(M,g)$ be a flat Riemannian manifold and let $T_{1}^{1}(M)$ be its $%
(1,1)-$tensor bundle equipped with the Cheeger-Gromoll type metric $^{CG}g$
and the almost product structure $^{D}I.$ Then the Levi-Civita connection $%
\widetilde{\nabla }$ of $^{CG}g$ coincides with the product conjugate
connection (or metric connection) $\widetilde{\nabla }^{(^{D}I)}$
constructed by the Levi-Civita connection $\widetilde{\nabla }$ of $^{CG}g$
and the almost product structure $^{D}I$.
\end{remark}

The relationship between curvature tensors $R_{\nabla }$ and $R_{\nabla
^{(F)}}$ of the connections $\nabla $ and $\nabla ^{(F)}$ is as follows: $%
R_{\nabla ^{(F)}}(X,Y,Z)=F(R_{\nabla }(X,Y,FZ)$ for all $X,Y,Z\in \Im
_{0}^{1}(M)$ \cite{Blaga}$.$ Using the almost product structure $^{D}I$ and
curvature tensor of the Cheeger-Gromoll type metric $^{CG}g$ (for curvature
tensor of $^{CG}g,$ see \cite{Peyghan}), by means of $\widetilde{R}_{%
\widetilde{\nabla }^{(^{D}I)}}(\widetilde{X},\widetilde{Y},\widetilde{Z}%
)=^{D}I(\widetilde{R}_{\widetilde{\nabla }}(\widetilde{X},\widetilde{Y},^{D}I%
\widetilde{Z})$, the curvature tensor $\widetilde{R}_{\widetilde{\nabla }%
^{(^{D}I)}}$ of the product conjugate connection (or metric connection) $%
\widetilde{\nabla }^{(^{D}I)}$ can be easily written. Also note that another
metric connection of the Cheeger-Gromoll type metric $^{CG}g$ can be
constructed by using the almost product structure $J$.

The torsion tensor $^{\widetilde{\nabla }^{(^{D}I)}}\widetilde{T}$ of the
product conjugate connection $\widetilde{\nabla }^{(^{D}I)}$ (or metric
connection of $^{CG}g$) has the following properties:%
\begin{eqnarray*}
^{\widetilde{\nabla }^{(^{D}I)}}\widetilde{T}(^{H}X,^{H}Y) &=&-2{(\tilde{%
\gamma}-\gamma )R(X,Y)} \\
^{\widetilde{\nabla }^{(^{D}I)}}\widetilde{T}(^{H}X,^{V}B) &=&{{-{}{\frac{1}{%
\alpha }}{}^{H}}}\left( g^{bj}\,R(t_{b},B_{j})X+g_{ai}\,(t^{a}(g^{-1}\circ
R(\quad ,X)\tilde{B}^{\,i})\right) \\
^{\widetilde{\nabla }^{(^{D}I)}}\widetilde{T}(^{V}A,^{H}Y) &=&{{{\frac{1}{%
\alpha }}{}^{H}}}\left( g^{bj}\,R(t_{b},A_{j})Y+g_{ai}\,(t^{a}(g^{-1}\circ
R(\quad ,Y)\widetilde{A}^{\,i})\right) \\
^{\widetilde{\nabla }^{(^{D}I)}}\widetilde{T}(^{V}A,^{V}B) &=&0.
\end{eqnarray*}

The last equations lead to the following result.

\begin{theorem}
Let $(M,g)$ be a Riemannian manifold and let $T_{1}^{1}(M)$ be its $(1,1)-$%
tensor bundle equipped with the Cheeger-Gromoll type metric $^{CG}g$ and the
almost product structure $^{D}I$. The product conjugate connection (or
metric connection) $\widetilde{\nabla }^{(^{D}I)}$ constructed by the
Levi-Civita connection $\widetilde{\nabla }$ of $^{CG}g$ and the almost
product structure $^{D}I$ is symmetric if and only if $M$ is flat.
\end{theorem}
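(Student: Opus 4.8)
The plan is to read off the symmetry criterion directly from the torsion formulas displayed immediately before the statement. Recall that a linear connection $\widetilde{\nabla}^{(^{D}I)}$ is symmetric precisely when its torsion tensor $^{\widetilde{\nabla}^{(^{D}I)}}\widetilde{T}$ vanishes identically on $T_{1}^{1}(M)$, and since the horizontal and vertical lifts $^{H}X,{}^{V}A$ span the tangent spaces of $T_{1}^{1}(M)$ at every point, it suffices to check vanishing on the four bracket-type pairs $(^{H}X,{}^{H}Y)$, $(^{H}X,{}^{V}B)$, $(^{V}A,{}^{H}Y)$, $(^{V}A,{}^{V}B)$.

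First I would invoke the torsion computations already recorded above: the $(^{V}A,{}^{V}B)$ component is identically $0$, so it imposes no condition. The remaining three components are all built from the curvature tensor $R$ of $(M,g)$ — namely $^{\widetilde{\nabla}^{(^{D}I)}}\widetilde{T}(^{H}X,{}^{H}Y) = -2(\tilde{\gamma}-\gamma)R(X,Y)$ and the two mixed components expressed through $g^{bj}R(t_{b},B_{j})X$ and the $g^{-1}\circ R$ term. Hence symmetry of $\widetilde{\nabla}^{(^{D}I)}$ is equivalent to the simultaneous vanishing of all three of these expressions for every $X,Y\in\Im_{0}^{1}(M)$, every $A,B\in\Im_{1}^{1}(M)$, and at every point $\tilde{P}=(x^{j},t_{j}^{i})$ of $T_{1}^{1}(M)$.

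The ``if'' direction is immediate: if $M$ is flat then $R\equiv 0$, so every displayed torsion component vanishes and the connection is symmetric. For the ``only if'' direction, assume $\widetilde{\nabla}^{(^{D}I)}$ is symmetric. From $^{\widetilde{\nabla}^{(^{D}I)}}\widetilde{T}(^{H}X,{}^{H}Y)=0$ we get $(\tilde{\gamma}-\gamma)R(X,Y)=0$; unwinding the definition $(\tilde{\gamma}-\gamma)R(X,Y)=\bigl(0,\;t_{m}^{i}R_{klj}^{\ \ \ m}X^{k}Y^{l}-t_{j}^{m}R_{klm}^{\ \ \ i}X^{k}Y^{l}\bigr)$ and using that this must hold for all values of the fibre coordinate $t_{j}^{i}$, one extracts $R_{klj}^{\ \ \ m}=0$, i.e. $R=0$. (Concretely, choosing $t$ to be the identity tensor $t_{j}^{i}=\delta_{j}^{i}$ already forces $R_{klj}^{\ \ \ i}-R_{kli}^{\ \ \ j}$-type combinations, and varying $t$ over a basis of $T_{1}^{1}(P)$ isolates each component $R_{klj}^{\ \ \ m}$.) Therefore $M$ is flat. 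Either of the two mixed torsion components could be used as an alternative route to the same conclusion.

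I expect the only mildly delicate point to be the index bookkeeping in the ``only if'' direction: one must argue carefully that $(\tilde{\gamma}-\gamma)R(X,Y)=0$ holding at every point of the total space — equivalently, as an identity in the fibre variable $t$ — genuinely forces $R\equiv 0$ on $M$, rather than some weaker trace-type condition. This is handled cleanly by treating the $t_{j}^{i}$ as free parameters and differentiating (or simply evaluating at enough choices of $t$), exactly as in the pure/decomposability arguments used for Theorems~\ref{Theo2} and~\ref{Theo5}. No new machinery beyond the torsion formulas above and the coordinate expression for $(\tilde{\gamma}-\gamma)R$ from Section~2 is needed, so the proof reduces to these short observations.
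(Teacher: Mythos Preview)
Your proposal is correct and follows exactly the paper's approach: the paper simply records the four torsion components of $\widetilde{\nabla}^{(^{D}I)}$ and then states that ``the last equations lead to the following result'', leaving the reader to observe that all nonzero components are built from $R$ and that the $(\tilde{\gamma}-\gamma)R(X,Y)$ term vanishes for all fibre values $t$ only when $R\equiv 0$. Your write-up in fact supplies more detail on the ``only if'' direction than the paper itself does.
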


\end{document}